\providecommand{\U}[1]{\protect\rule{.1in}{.1in}}
\newtheorem{theorem}{Theorem}
\theoremstyle{plain}
\newtheorem{definition}{Definition}
\newtheorem{example}{Example}
\newtheorem{lemma}{Lemma}
\newtheorem{proposition}{Proposition}
\newtheorem{remark}{Remark}
\numberwithin{equation}{section}
\begin{document}
\title[Moment sequences]{Moment sequences and difference equations.}
\author{Pawe\l \ J. Szab\l owski}
\address{emeritus in Department of Mathematics and Information Sciences, \\
Warsaw University of Technology\\
ul Koszykowa 75, 00-662 Warsaw, Poland\\
 }
\email{pawel.szablowski@gmail.com}
\thanks{The author is very grateful to the unknown referee for his (or her) helpful,
extensive and friendly remarks and leads.}
\date{March 2022}
\subjclass[2000]{Primary 39A06, 44A60 Secondary 30E05}
\keywords{Moment, moment sequence, random variable, distribution, difference equation, }

\begin{abstract}
We recall the definition and properties of a moment sequence and show that all
real sequences whose Hankel matrices have finite rank (see definition in the
sequel) satisfy a homogeneous linear equation with constant coefficients. Then
we analyze the cases in which a difference equation with constant coefficients
and suitably chosen initial conditions and having as an input a positive
moment sequence has a solution that is a positive moment sequence. We give one
general simple result and give many examples illustrating the theory. The main
result states that the roots of the odd multiplicity of the characteristic
equation must lie outside the support of the measure that produces the moment
sequence that is in the input and the initial conditions suitably chosen.

\end{abstract}
\maketitle

\section{Introduction}

The purpose of this note is to examine the situation when there exists an when
there is an overlap between a certain important class of real sequences and
homogeneous difference equations with constant coefficients and also the cases
when a difference equation with a positive moment (pm) sequence as input
produces a positive moment sequence. It will turn out that firstly, all
sequences having a finite rank of their so-called Hankel matrices satisfy a
certain homogeneous difference equation with constant coefficients and
secondly, that not every pm sequence produces a pm sequence in this way. This
depends on whether the roots of the characteristic equation lie in the support
of the measure that generates the input pm sequence and also on the fact that
the initial conditions are suitably chosen. In this way, one can use the
properties of the solutions of the difference equation with given pm input to
test whether the roots of the characteristic equation lie in the support of
the measure that produced the pm input sequence.

Another purpose of this note is to provide a probabilistic interpretation of
the results presented in the excellent paper of Bennett \cite{Benn11} and thus
present their sometimes much simpler proofs. We also aim to generalize some of
these results and also provide a connection between moment sequences and
difference equations.

The paper is organised as follows. In the next section, we recall the
definition of a moment sequence and formulate sufficient conditions for a
moment sequence to identify the measures that generate it. Then we analyse the
set of positive moment (pm) sequences. We give several examples and formulate
a series of operations on pm sequences that result in another pm sequence.
Many results of this section are known and scattered throughout the
literature; we recall them for the sake of completeness of the paper and
provide, in cases when it is possible, the new proofs stemming mostly from
probability. The following section presents new results on when a difference
equation with constant coefficients and a pm input produces a pm sequence. The
final section presents several important examples.

\section{Moment sequences}

Let us agree that whenever the domain of integration is not specified
explicitly, it is assumed to coincide with the support of the measure with
respect to which the integral is taken. 

Let us start with the following definition of the moment sequence.

Let $\mu$ be a certain measure on the real line. Let $\mu^{+}$ and $\mu^{-}$
be the elements of the Jordan-Hahn decomposition of $\mu$, i.e., that $\mu
^{+}$and $\mu^{-}$ are two positive measures such that $\mu(B)\allowbreak
=\allowbreak\mu^{+}(B)\allowbreak-\allowbreak\mu^{-}(B)$ for every $\mu$-
measurable subset $B$ of the real line. Let us denote $\left\vert
\mu\right\vert $ the sum of the two positive measures $\mu^{+}$ and $\mu^{-}$.
Let us consider a (signed in general) measure $\mu$ such that:
\begin{equation}
\sup_{n\geq1}\int\left\vert x\right\vert ^{n}\left\vert \mu\right\vert
(dx)<\infty. \label{*}%
\end{equation}
Then a sequence $\left\{  m_{n}\right\}  _{n\geq0}$ of reals defined by:
\begin{equation}
m_{n}\allowbreak=\allowbreak\int x^{n}\mu(dx), \label{sB}%
\end{equation}
is called a moment sequence of the measure $\mu.$ Boas theorem (see
\cite{Boas39}) states that every sequence is a moment sequence of some signed
measure satisfying (\ref{*}). In fact, Boas's theorem asserts some more
precise knowledge of the signed measure $\mu$. Namely, we have:

\begin{theorem}
[Boas]Any real sequence of numbers $s\allowbreak=\allowbreak\left\{
s_{j}\right\}  _{j\geq0}$ can be represented in the form%
\[
s_{n}\allowbreak=\allowbreak\int_{0}^{\infty}x^{n}\mu(dx),
\]
$n\allowbreak=\allowbreak0,1,\ldots$ with
\[
\int_{0}^{\infty}\left\vert \mu\right\vert (dx)<\infty.
\]

\end{theorem}

Hence, for any real sequence $s:=\left\{  s_{j}\right\}  _{j\geq0}$ we can
define the measure $\mu$ that generates this sequence according to (\ref{sB}).
Any measure (not necessarily defined uniquely) generating sequence $s$ will be
called generating measure of this sequence, briefly $gm(s)$.

Further, together with generating measure let us define a sequence of Hankel
matrices $\mathcal{H}_{n}\allowbreak\overset{df}{=}\allowbreak\lbrack
s_{i+j}]_{i,j=0}^{n}$ and determinants of these matrices $D_{n}\allowbreak
\overset{df}{=}\allowbreak\det\mathcal{H}_{n}$.

\begin{definition}
Let $\left\{  a_{i}\right\}  _{i\geq0}$ be a real sequence. The sequence of
Hankel matrices of this sequence i.e. $\left\{  \mathcal{H}_{n}\right\}
_{n\geq0}$ will be called sequence of the Hankel matrices of the sequence
(Hms). The sequence of determinants of the Hms , i.e., $\left\{
D_{n}\right\}  _{n\geq0}$ is called Hankel transform of the sequence $\left\{
a_{n}\right\}  .$
\end{definition}

Let us note, that there can be many real sequences having the same Hankel
transform. The in-depth analysis of the relationship between a real sequence
and its Hankel transform was done e.g. in \cite{Nash15}(chapter 5). For our
purposes it will be enough, to notice that following the Theorem of Kronecker
for a sequence $\left\{  a_{n}\right\}  _{n\geq0}$ with the sequence of Hms
$\left\{  \mathcal{H}_{n}\right\}  _{n\geq0}$ and Hankel transform $\left\{
D_{n}\right\}  _{n\geq0}$ to satisfy condition%
\begin{equation}
\sup_{n\geq1}~rank(\mathcal{H}_{n})=r<\infty\label{fR}%
\end{equation}
it is necessary and sufficient that $D_{r-1}\neq0$ and $\forall n\geq r$ :
$D_{n}\allowbreak=\allowbreak0.$

Sequences $s$ satisfying condition (\ref{fR}) will be called of finite rank
briefly fR-sequences.

\begin{remark}
Notice that, if the support of measure $\mu_{X}$ is finite say consisting of
points $\left\{  \alpha_{i}\right\}  _{n=1}^{m}$ with masses $\left\{
p_{i}\right\}  _{i=1}^{m},$ then each moment $m_{n}$ has the following form :%
\[
m_{n}=EX^{n}=\sum_{i=1}^{m}p_{i}\alpha_{i}^{n},
\]
and thus the sequence of Hms has the following form%
\[
\lbrack EX^{i+j}]_{i,j=0}^{n}=\sum_{k=1}^{m}p_{k}[\alpha_{k}^{i+j}%
]_{i,j=0}^{n}.
\]
Now notice, that each matrix of the form $[\alpha_{k}^{i+j}]_{i,j=0,\ldots,n}$
has rank $1,$ hence for every $n\geq m$ matrix $[EX^{i+j}]_{i,j=0,\ldots,n}$
has at most rank $m.$ Thus the sequences of such, finitely supported measures,
have their Hankel transforms consisting of zeros for elements with indices
greater than $m.$ In other words, we have shown that all moment sequences of
finitely supported measure are fR-sequences.
\end{remark}

\begin{proposition}
\label{FS}Let $s$ be a real sequence. Assume that its $gm(s)$ is finitely
supported and the cardinality of the support is $r,$ then there exist $r$
constants $b_{0},\ldots,b_{r-1}$ such that for $m\geq0$ :
\begin{equation}
\sum_{k=0}^{r-1}b_{k}s_{k+m}=s_{r+m}. \label{eqfR}%
\end{equation}
In other words all elements of sequence $s$ satisfy a homogeneous linear
equation with constant coefficients (see the definition and properties below
in Section \ref{MD}).
\end{proposition}

\begin{proof}
Assume that the support of $gm(s),$ that we will denote by $\mu,$ consists of
points $x_{1},\ldots,x_{r}\in\mathbb{R}$ . Let coefficients $b_{i},$
$i\allowbreak=\allowbreak0,\ldots,r-1$ be defined by the relationship.%
\[
\prod_{j=1}^{r}(x-x_{j})\allowbreak=\allowbreak x^{r}-\sum_{j=0}^{r-1}%
b_{j}x^{j}.
\]
Now notice that the measure $v$ defined by:
\[
v(dx)=\prod_{j=1}^{r}(x-x_{j})\mu(dx)
\]
is the zero measure. Hence, in particular, we have $\forall m\geq0$: $\int
x^{m}v(dx)\allowbreak=\allowbreak0.$ But this means that equation (\ref{eqfR})
is satisfied by the sequence $s$.
\end{proof}

The converse statement also holds. Namely, we have the following theorem that
has been formulated and proved in \cite{Nash15}(Chapter 5 Thm. 2.3). This
theorem can be traced back to Kronecker (from 1881)..

\begin{theorem}
Rank of Hankel transform of a real sequence $s$ is finite iff its generating
measure $\mu(dx)$ satisfies the following condition: There exist a polynomial
$Q(x)$ such that the measure $v(dx)\allowbreak=\allowbreak Q(x)\mu(dx)$
satisfies the following condition: $\forall m\geq0:$%
\begin{equation}
\int x^{m}v(dx)=0. \label{nmea}%
\end{equation}
More precisely, let the rank in question be $r,$ then there exist $r$ numbers
$b_{0},\ldots,b_{r-1}$ such that for $m\geq0$ :
\[
\sum_{k=0}^{r-1}b_{k}s_{k+m}=s_{r+m}.
\]
In other words there exists a homogeneous difference equation of order $r$
such that all elements of $s$ with indices greater than $r-2$ are defined by
this equation with initial conditions $s_{0},\ldots,s_{r-1}$. Moreover,
coefficients $\left\{  b_{i}\right\}  _{i=0}^{r-1}$ are defined by the
elements $s_{r},\ldots,s_{2r-1}$ of the sequence $s.$ In other words the
sequence is uniquely defined by its first $2r$ values. Besides, denoting
$Q(x)\allowbreak=\allowbreak x^{r}-\sum_{j=0}^{r-1}b_{j}x^{j}$ and
$v(dx)\allowbreak=\allowbreak Q(x)\mu\left(  dx\right)  $ we see that the
condition (\ref{nmea}) is satisfied.
\end{theorem}

\begin{remark}
Notice that if we assume that $gm(s)$ is identifiable by moments, then the
condition (\ref{nmea}) means that the measure $Q(x)\mu(dx)$ is the zero
measure and iff polynomial $Q(x)$ has all different and real roots, then from
Proposition \ref{FS} it follows that the measure is finitely supported and
their support consists of the root of the polynomial $Q$.
\end{remark}

From now on we will be interested in the cases when measure $\mu$ is a
positive measure, that is $\mu^{-}\allowbreak=\allowbreak0$. The exhaustive
study of conditions guaranteeing identifiability by moments of the positive
measure was done recently in \cite{Gwo2017}.

Let $\operatorname*{supp}\mu$ denote its support and then let us define a
sequence of numbers defined in the following way:%
\[
m_{n}\allowbreak=\allowbreak\int_{\operatorname*{supp}\mu}x^{n}\mu(dx),
\]
for $n\geq0.$ We will call such sequence a p(ositive)m(oment) sequence. It is
easily seen that by multiplying a pm sequence by a positive number we get
another pm sequence. Hence the set of pm sequences forms a cone $\mathcal{M}$
in the space of all sequences. To simplify notation we will denote for
simplicity integrals over $\operatorname*{supp}\mu$ by simply by $\int$.
Obviously, we have $m_{2i}\geq0,$ for $i\allowbreak=\allowbreak0,1,2\ldots$.
Using Cauchy-Schwarz inequality, we immediately have:
\[
m_{n}^{2}=\left(  \int x^{n-j+j}\mu(dx)\right)  ^{2}\leq m_{2(n-j)}m_{2j},
\]
for all $0\leq j\leq n$. Remembering that function $\left\vert x\right\vert
^{a}$ is convex for all $\alpha>1$ and after applying Jensen's inequality we
get for all $0<\alpha<\beta$%
\[
\left(  \int\left\vert x\right\vert ^{\alpha}\mu(dx)\right)  ^{\beta/\alpha
}\leq m_{0}^{\beta/\alpha-1}\int\left\vert x\right\vert ^{\beta}\mu(dx).
\]
Consequently, for the case $m_{0}\allowbreak=\allowbreak1,$ we get for all
$j\geq n$%
\[
m_{2n}^{1/(2n)}\leq m_{2j}^{1/(2j)}.
\]
Additionally, if $\operatorname*{supp}\mu\subset\lbrack0,\infty)$ then
obviously we also have seen that $m_{n}^{1/n}$ constitutes a non-decreasing sequence.

Sometimes, switching to the so-called random variables is more straightforward
and intuitive. Namely, it is known that for every positive such measure $\mu$
that $\int\mu(dx)\allowbreak=\allowbreak1$ one can define another measurable
space $(\Omega,\mathcal{B},P)$ (so-called probability space) and measurable
mapping $X:$ $\Omega\rightarrow R$ such that:
\[
P(X^{-1}(B))=\mu(B),
\]
for every measurable subset $B$ of the real line. By the way, measure $\mu$ is
called the distribution of the random variable $X$ and we obviously have:%
\[
\int_{\Omega}f(X(\omega))P(d\omega)\allowbreak=\allowbreak\int f(x)\mu(dx),
\]
for every real, integrable ($\operatorname{mod}\mu)$ function $f$ on the real
line. The distribution of any random variable is called sometimes a
probability measure on the real line. Traditionally, the integral over
$\Omega$ with respect to the probability measure $P$ is simply denoted by $E$
(-expectations). So we will interchangeably use the notation $Ef(X)\allowbreak
=\allowbreak\int f(x)\mu_{X}(dx)$, where $\mu_{X}$ denotes the distribution of
the random variable $X$.

We have important result called Hamburger moment criterion.

\begin{theorem}
A real sequence is a pm sequence iff the sequence of its Hms is positive semi-definite.
\end{theorem}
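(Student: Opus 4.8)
The plan is to prove the equivalence by recognizing this as the classical Hamburger moment problem, since the condition (\ref{*}) guarantees that all moments exist and the support can be the whole real line. The heart of the matter is the positive semidefiniteness of the Hankel (moment) matrices, so I would first establish the forward direction and then construct a measure for the reverse direction.

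For the forward direction (a pm sequence has nonnegative Hankel transform), I would argue that each Hankel matrix $H_n = [m_{i+j}]_{i,j=0,\ldots,n}$ is positive semidefinite. Given any real vector $(c_0,\ldots,c_n)$, I would compute
\begin{equation*}
\sum_{i,j=0}^{n} c_i c_j m_{i+j} = \sum_{i,j=0}^{n} c_i c_j \int x^{i+j}\mu(dx) = \int \left(\sum_{i=0}^{n} c_i x^i\right)^{\!2} \mu(dx) \geq 0,
\end{equation*}
where the last inequality uses that $\mu$ is a positive measure. Since a symmetric positive semidefinite matrix has nonnegative determinant, this gives $\det H_n \geq 0$ for every $n$, which is exactly nonnegativity of the Hankel transform.

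For the reverse direction (nonnegative Hankel transform implies the sequence is a pm sequence), the obstacle is that nonnegativity of the determinants alone is weaker than positive semidefiniteness of the full matrices, so I would need to upgrade the hypothesis: nonnegativity of all leading principal minors of the nested symmetric matrices $H_n$ does yield positive semidefiniteness of each $H_n$ (this is where one must be slightly careful, invoking a limiting or perturbation argument, e.g.\ replacing $m_0$ by $m_0+\varepsilon$ to make the minors strictly positive and hence the matrices positive definite by Sylvester's criterion, then letting $\varepsilon \to 0$). Once positive semidefiniteness of every $H_n$ is in hand, I would invoke the solution of the Hamburger moment problem: a sequence with all Hankel matrices positive semidefinite is the moment sequence of some positive measure on $\mathbb{R}$.

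The step I expect to be the main obstacle is precisely this passage from ``all Hankel determinants nonnegative'' to ``all Hankel matrices positive semidefinite,'' because the determinant condition as literally stated controls only the top-order minors $\det H_n$ rather than every principal minor. The clean resolution is to observe that the sequence $\{\det H_n\}_{n\geq 0}$ being nonnegative, together with the nested structure, is standard in the moment-problem literature as equivalent to positive semidefiniteness; I would either cite the Hamburger existence theorem directly or give the $\varepsilon$-perturbation argument above to make the logic self-contained. The existence of the representing measure then completes the proof, and condition (\ref{*}) need not be separately verified since the moment problem produces a measure whose moments are exactly the given sequence.
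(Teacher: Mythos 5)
Your forward direction is correct and is essentially the paper's own proof: the paper phrases the same computation probabilistically, writing $E\bigl(\sum_{j=0}^{N}\alpha_{j}X^{j}\bigr)^{2}\geq 0$ as $\mathbf{\alpha}_{N}^{T}\left(E\mathbf{X}_{N}\mathbf{X}_{N}^{T}\right)\mathbf{\alpha}_{N}\geq 0$, which is your integral identity in expectation notation. Be aware, however, that the paper proves \emph{only} this direction and stops; the converse is left without any argument. So for the reverse implication you are on your own, and that is precisely where your proposal breaks down.

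The step you flagged as the ``main obstacle'' is not a removable technicality: the implication ``all Hankel determinants nonnegative $\Rightarrow$ all Hankel matrices positive semidefinite'' is genuinely false, and your $\varepsilon$-perturbation does not repair it. Take $m_{0}=0$, $m_{1}=0$, $m_{2}=-1$. Then $\det H_{0}=0$, $\det H_{1}=0$, and a direct cofactor expansion gives $\det H_{2}=1$ regardless of $m_{3},m_{4}$; moreover, since $m_{2n}$ enters $\det H_{n}$ linearly with coefficient $\det H_{n-1}$, one can inductively choose the remaining even moments large enough that $\det H_{n}>0$ for all $n\geq 2$. Thus the entire Hankel transform is nonnegative, yet no positive measure satisfies $m_{2}=\int x^{2}\mu (dx)=-1$, so the sequence is not a pm sequence. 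Your perturbation $m_{0}\mapsto m_{0}+\varepsilon$ fails on the same example: it yields $\det H_{1}=-\varepsilon <0$, so Sylvester's criterion never becomes applicable; in general the perturbation adds $\varepsilon \det [m_{i+j+2}]_{i,j=0,\ldots ,n-1}$ to $\det H_{n}$, and this shifted Hankel determinant is not controlled by the hypothesis. The point is that nonnegativity of the determinants $\det H_{n}$ controls only the \emph{leading} principal minors of the nested family (the standard counterexample at the matrix level is $\operatorname{diag}(0,-1)$), whereas positive semidefiniteness requires all principal minors. A correct converse must either take positive semidefiniteness of every $H_{n}$ as the hypothesis (after which Hamburger's theorem applies exactly as you cite it), or treat the degenerate case separately: if $\det H_{n}>0$ for all $n$, Sylvester plus Hamburger works, while if some $\det H_{N}=0$ one needs the additional information that the determinants vanish from that index on, corresponding to a finitely supported measure (compare the paper's Remark following the theorem). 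As literally stated, the ``if'' direction of the theorem needs such a qualification, and your proof inherits exactly this defect.
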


\begin{proof}
The necessity is simple. Let $X$ be a random variable with a moment sequence
$\left\{  a_{n}\right\}  $ , i.e., $a_{k}\allowbreak=\allowbreak EX^{k},$
$k\geq0.$ Then for any real sequence $\left\{  \alpha_{n}\right\}  $ then for
any $N$ we have $E\left(  \sum_{j=0}^{N}\alpha_{j}X^{j}\right)  ^{2}\geq0.$
But this quantity can be written in the following way:%
\[
\mathbf{\alpha}_{N}^{T}\left(  E\mathbf{X}_{N}\mathbf{X}_{N}^{T}\right)
\mathbf{\alpha}_{N}\geq0,
\]
where $\mathbf{\alpha}_{N}^{T}\allowbreak=\allowbreak(\alpha_{0},\ldots
,\alpha_{N}),$ $\mathbf{X}_{N}^{T}=(1,X,\ldots,X^{N})$ and $\mathbf{x}^{T}$
denotes transposition of the vector $\mathbf{x.}$ Now, this inequality means
that matrices $\left\{  E\mathbf{X}_{N}\mathbf{X}_{N}^{T}\right\}  _{N\leq0}$
are nonnegative definite, hence their major determinants must be nonnegative.

To prove the converse statement is more complicated. That is we will skip it,
since it is not the main topic of the paper.
\end{proof}

\begin{remark}
Let us recall that from the matrix theory it follows that:

1) the symmetric matrix is positive definite iff all its leading principal
minors are positive,

2) the symmetric matrix is positive semi-definite iff all its principal minors
are non-negative.
\end{remark}

\begin{remark}
Consequently, we can notice that if the sequence of Hankel transforms of a
sequence is positive, then the sequence of Hms is positive definite,
consequently, the sequence is a pm sequence.
\end{remark}

\begin{remark}
On the other hand, we have sequences having non-negative Hankel transforms
that are not positive moment sequences. One of them is the following sequence
$\{1,1,1,1,0,0,\ldots.\}$. One can see that the Hankel transform of this
sequence is $\left\{  1,0,0,1,0,\ldots\right\}  .$ The sequence
$\{1,1,1,1,0,0,\ldots.\}$ cannot be a pm sequence since $EX^{3}=1$ and
$EX^{4}\allowbreak=\allowbreak0$!
\end{remark}

Recently Berg\&Szwarc in \cite{Berg15} have made a contribution that clarifies
the case of non-negativity of the sequence of Hankel transform of a sequence
$s$ and the existence of a finitely supported positive measure generating $s$.
Namely, they proved the following result:

\begin{theorem}
[Berg\&Szwarc]Let $s\allowbreak=\allowbreak\left\{  s_{i}\right\}
_{i=0}^{\infty}$ be such a real sequence that its sequence of Hankel
transforms $\left\{  D_{j}\right\}  _{j=0}^{\infty}$ satisfy the following
condition: $\exists r>0:D_{j}>0$ for $j\leq r-1$ and $D_{j}=0$ for $j\geq r,$
then there exists a positive measure $\mu$ supported on exactly $r$ points
such that $s$ is a pm sequence and $gm(s)\allowbreak=\allowbreak\mu$.
\end{theorem}

Following the paper by Bennett \cite{Benn11}, we have yet another way of
deciding if a given sequence is a moment sequence. Namely, we have the
following criterion (compare \cite{Akhize38}).

\begin{definition}
Let $I$ be a segment of a real line. A given sequence $\left\{  m_{j}\right\}
_{j\geq0}$ is a pm sequence on $I$ iff for every polynomial $\sum_{k=0}%
^{n}c_{k}x^{k}$ that is nonnegative on $I$ we have $\sum_{k=0}^{n}c_{k}%
m_{k}\geq0.$
\end{definition}

\begin{remark}
It is well-known that if the cardinality of the support of the measure $\mu$
is infinite, then the sequence of Hankel transforms of the moment sequence of
the measure $\mu$ is strictly positive.
\end{remark}

Now, having random variables we can simply utter some rules concerning moment
sequences. Most of them were formulated in the paper of Bennett \cite{Benn11}.
Some of them had quite complicated proofs. Due to probabilistic
interpretation, we can substantially simplify these proofs. Assertions 6. 7.
and 8. of the Proposition below seem to be unknown to Bennett. Before we
formulate the results let us introduce the following notion%
\begin{equation}
h_{i,n}(\chi)\allowbreak=\allowbreak\binom{n}{i}\int_{0}^{1}\theta
^{i}(1-\theta)^{n-i}\chi(d\theta),\label{Haus}%
\end{equation}
where with $\chi$ being some probability measure on $[0,1]$. We will call
numbers $\left\{  h_{i,n}(\chi)\allowbreak\right\}  _{n\geq0,0\leq i\leq n}$
Hausdorff means.

We have:

\begin{proposition}
\label{first}Let $\left\{  a_{n}\right\}  _{n\geq0}$ and $\left\{
b_{n}\right\}  _{n\geq0}$ be two pm sequences. Then, so are the following sequences:

1. $\left\{  \alpha a_{n}+\beta b_{n}\right\}  _{n\geq0}\text{, }$for
$\alpha,\beta\geq0$.

2. $\left\{  \sum_{i=0}^{n}h_{i,n}(\chi)\alpha^{i}a_{i}\beta^{n-i}%
b_{n-i}\right\}  _{n\geq0}$, .

3. $\left\{  a_{n}b_{n}\right\}  _{n\geq0},$ $\left\{  a_{kn}\right\}
_{n\geq0}\text{, }k\in\mathbb{N}$,$\ $ $c_{n}=\left\{
\begin{array}
[c]{ccc}%
a_{2k} & if & n=2k\\
0 & if & n=2k+1
\end{array}
\right.  \ \text{,}\ $ $k=0,1,\ldots$ .

4.$\ $ If a pm sequence $\left\{  a_{n}\right\}  _{n\geq0}$ is nonnegative,
then also pm is the following sequence: $b_{n}^{\ ^{\prime}}=\left\{
\begin{array}
[c]{ccc}%
0 & if & n=2k+1\\
a_{k} & if & n=2k
\end{array}
\right.  \ \text{.}\ $

5. Suppose $\left\{  a_{n}\right\}  _{n\geq0}$ is a pm sequence, then for all
$k\geq1$ the following sequences $\left\{  a_{2k+n}\right\}  _{n\geq0}$ are
the pm sequences.

6. Suppose that for a given pm sequence $\left\{  a_{n}\right\}  _{n\geq0}$ we
have $a_{0}\allowbreak=\allowbreak1$ and $a_{2}\allowbreak=\allowbreak
a_{1}^{2},$ then $\forall n\geq0:a_{n}\allowbreak=\allowbreak a_{1}^{n}.$

7. Suppose that a sequence $\left\{  b_{n}\right\}  _{n\geq0}$ is a pm
sequence and

\qquad i) suppose further that $b_{0}\allowbreak=\allowbreak1$, and
$b_{4m+2}\allowbreak=\allowbreak b_{2m+1}^{2}$ for some $m\geq0$. Then
$\forall n\geq0:$ $b_{n}\allowbreak=\allowbreak b_{2m+1}^{n/(2m+1)}$ .

\qquad ii) suppose further that $b_{0}\allowbreak=\allowbreak1$ and
$b_{4m}\allowbreak=\allowbreak b_{2m}^{2}$ for some $m\geq1.$ Let us set
$p\allowbreak=\allowbreak(b_{1}\allowbreak+\allowbreak b_{2m}^{1/(2m)}%
)/(2b_{2m}^{1/(2m)})$ Then for all $n\geq0:b_{2n}\allowbreak=\allowbreak
b_{2m}^{n/m}$ and $b_{2n+1}\allowbreak=\allowbreak b_{2m}^{(2n+1)/(2m)}%
p-b_{2m}^{(2n+1)/(2m)}(1-p).$

8. Suppose that a sequence $\left\{  b_{n}\right\}  _{n\geq0}$ is a pm
sequence then for all $n\geq1:$
\[
b_{2n-2}b_{2n+2}\geq b_{2n}^{2}.
\]
Further, if the measure generating sequence $\left\{  b_{n}\right\}  $ has
infinite support, then the sequence $\left\{  b_{2n}\right\}  $ is log-convex.
In particular the following sequence $\left\{  b_{2n+2}/b_{2n}\right\}
_{n\geq0}$ is non-decreasing.
\end{proposition}

\begin{proof}
1. Let us define the following matrices $A_{n}\allowbreak=\allowbreak\lbrack
a_{i+j}]_{i,j=0,1,\ldots,n}$ and $B_{n}\allowbreak=\allowbreak\lbrack
b_{i+j}]_{i,j=0,1,\ldots,n}.$ Since the sequences $\left\{  \det
A_{n}\right\}  _{n\geq0}$ and $\left\{  \det B_{n}\right\}  _{n\geq0}$ are
nonnegative by assumption, then so the matrices $\left\{  A_{n}\right\}  $ and
$\left\{  B_{n}\right\}  $ are nonnegative defined hence so are their convex
combinations, that is the sequence $\left\{  \det[\alpha A_{n}+\beta
B_{n}\right\}  _{n\geq0}$ is non-negative.

2. Obviously, the sequence $\left\{  E\left(  \alpha\theta X+\beta
(1-\theta)Y\right)  ^{n}\right\}  $ where independent random variables $X$ and
$Y$ and independent also of the random variable $\theta$ are such that
$a_{n}\allowbreak=\allowbreak EX^{n}$ and $b_{n}\allowbreak=\allowbreak
EY^{n}$ for all $n\geq0$. But we have
\[
E\left(  \alpha\theta X+\beta(1-\theta)Y\right)  ^{n}=\sum_{i=0}^{n}%
h_{i,n}(\chi)\alpha^{i}a_{i}\beta^{n-i}b_{n-i}.
\]

3. If two independent random variables $X$ and $Y$ are such that
$EX^{n}\allowbreak=\allowbreak a_{n}$ and $EY^{n}\allowbreak=\allowbreak
b_{n}$, the $E(XY)^{n}\allowbreak=\allowbreak EX^{n}EY^{n}$ is also a pm
sequence. Notice, that $a_{kn}\allowbreak=\allowbreak E(X^{k})^{n}.$ Note,
that if $X$ is such a random variable that $a_{n}\allowbreak=\allowbreak
EX^{n,}$ then $(-1)^{n}a_{n}\allowbreak=\allowbreak E(-X)^{n}.$ Now, we apply
assertion $1$ with $\alpha\allowbreak=\allowbreak\beta\allowbreak
=\allowbreak1,$ $b_{n}\allowbreak=\allowbreak(-1)^{n}a_{n}$ .

4. First notice that if a pm sequence $\left\{  a_{n}\right\}  _{n\geq0}$ is
nonnegative, then the supporting measure must be concentrated on $[0,\infty)
$. We apply assertion 1. with $\alpha\allowbreak=\allowbreak\beta
\allowbreak=\allowbreak1,$ $a_{n}\allowbreak=\allowbreak E(\sqrt{\left\vert
X\right\vert })^{n},$ $b_{n}\allowbreak=\allowbreak(-1)^{n}a_{n} $ for some
random variable $X$. Now the sequence $\left\{  a_{n}+b_{n}\right\}  $
contains zeros for odd indices and $2E\left\vert X\right\vert ^{k}$ for
$n\allowbreak=\allowbreak2k$. But we can always divide elements of a pm
sequence by a positive number. See also \cite{Chih79}, p. 40.

5. Let us notice that if for some $k$ we have $a_{2k}\allowbreak
=\allowbreak0,$ then the supporting measure of $X$ must be equal to zero,
i.e., $\mu(B)\allowbreak=\allowbreak0$ for all Borel sets $B.$ Consequently,
we would have $a_{0}\allowbreak=\allowbreak0$ and the statement assertion
would be true. So let us assume that $a_{0}>0$ and consequently that
$a_{2k}>0$ for all $k\geq0.$ Let $X$ be such a random variable that
$EX^{n}\allowbreak=\allowbreak a_{n}/a_{0}.$ Let $\mu(.)$ denote the
distribution of $X$ and let us denote by $\nu_{k}$ the probability measure
$x^{2k}d\mu(x)/a_{2k}.$ Let us denote $Y_{k}$ the random variable that has
distribution $\nu_{k}.$ Then $EY_{k}^{n}\allowbreak=\allowbreak\frac{1}%
{a_{2k}}\int y^{n}y^{2k}d\mu(y)\allowbreak=\allowbreak\frac{1}{a_{2k}%
}EX^{2k+n}\allowbreak=\allowbreak a_{2k+n}/a_{2k}.$ hence $\left\{
a_{2k+n}/a_{2k}\right\}  _{n\geq0}$ is a pm sequence and by assertion 1. also
$\left\{  a_{2k+n}\right\}  _{ns>0}$ is a pm sequence.

6. Let $X$ denote a random variable whose moments are $\alpha_{n},$ that is
$EX^{n}\allowbreak=\allowbreak\alpha_{n}.$ We have $\operatorname*{var}%
(X)\allowbreak=\allowbreak EX^{2}\allowbreak-\allowbreak(EX)^{2}%
\allowbreak=\allowbreak\rho^{2}-\rho^{2}\allowbreak=\allowbreak0.$ But this
equality means that the distribution of $X$ is a one-point distribution, i.e.
$P(X=\rho)\allowbreak=\allowbreak1.$

7. For the proof see Lemma 2.2 of \cite{Szabl22}.

8. Since by the fact that $\left\{  b_{n}\right\}  _{n\geq0}$ is a pm
sequence, then all central minors of the matrix $\left[  b_{i+j}\right]
_{i,j\geq0}$ must be nonnegative and moreover, in particular, we must have%
\begin{equation}
b_{2n-2}b_{2n+2}\geq b_{2n}^{2}, \label{b2n}%
\end{equation}
for all $n\geq1.$ Now, if the measure that produces the sequence $\left\{
b_{n}\right\}  $ has infinite support, then $\forall n\geq0:b_{2n}>0.$ Hence,
\[
\log b_{2(n-1)}+\log b_{2(n+1)}\geq2\log b_{2n},
\]
it proves log-convexity. On the other hand, we can easily deduce from
(\ref{b2n}) that $b_{2n+2}/b_{2n}\geq$ $b_{2n}/b_{2n-2}$ since $b_{2n}>0.$
\end{proof}

\begin{remark}
Let us consider Haussdorf means $\left\{  h_{i,n}(\chi)\right\}  $. We have
$h_{i,n}(\chi)\geq0$ and moreover $\forall n\geq0:\sum_{i=0}^{n}h_{i,n}%
(\chi)\allowbreak=\allowbreak1.$ Hence, for any number sequence $\left\{
\gamma_{n}\right\}  _{n\geq0}$ the sums
\[
\sum_{i=0}^{n}h_{i,n}(\chi)\gamma_{i},
\]
are the kind of averages.

Let us give some examples.

i) Take measure $\chi$ to be a one-point probability measure concentrated at
$t\in(0,1).$ Then we have $h_{i,n}(\chi)\allowbreak=\allowbreak\binom{n}%
{i}t^{i}(1-t)^{n-i}.$ Let is take $b_{n}\allowbreak=\allowbreak c,$ and
$\alpha\allowbreak=\allowbreak\beta\allowbreak=\allowbreak1,$ then from
assertion 2. it follows that $\left\{  c\sum_{j=0}^{n}\binom{n}{j}t^{j}%
a_{j}(1-t)^{n-j}\right\}  $ is a pm sequence provided $\left\{  a_{j}\right\}
$ is. Further taking $\alpha\allowbreak=\allowbreak1/2,$ $t\allowbreak
=\allowbreak1/2,$ we see that the so-called binomial transform of the sequence
$\left\{  a_{j}\right\}  ,$ , i.e., $\left\{  \sum_{j=0}^{n}\binom{n}{j}%
a_{j}\right\}  $ is also a pm sequence. If we take $\alpha\allowbreak
=\allowbreak-\beta\allowbreak=\allowbreak-1\allowbreak=\allowbreak-2t,$ then
also the following sequence $\left\{  \sum_{j=0}^{n}(-1)^{j}\binom{n}{j}%
a_{j}\right\}  $ which is called inverse binomial transform is a pm sequence.

ii) Take $\chi(dx)\allowbreak=\allowbreak\beta(1-x)^{\beta-1},$ for some
$\beta\geq0.$ Then
\[
h_{i,n}(\chi)\allowbreak=\allowbreak\beta\binom{n}{i}\frac{\Gamma
(i+1)\Gamma(n-i+\beta)}{\Gamma(n+\beta+1)}=\frac{\beta n!\Gamma(n-i+\beta
)}{\Gamma(n+\beta)(n-i)!}.
\]
In particular, if $\beta\allowbreak=\allowbreak1$ we have $h_{i.n}%
(\chi)\allowbreak=\allowbreak\frac{1}{n+1}.$ Hence, following assertion 2.
Proposition \ref{first} we get%
\begin{equation}
\frac{1}{n+1}\sum_{i=0}^{n}a_{i}b_{n-i} \label{av_con}%
\end{equation}
is a pm sequence provided $\left\{  a_{i}\right\}  $ and $\left\{
b_{n}\right\}  $ are.

Given some two sequences $\left\{  a_{i}\right\}  _{i\geq0}$ and $\left\{
b_{i}\right\}  _{i\geq0}$ the following sequence \newline$\left\{  \sum
_{j=0}^{i}a_{j}b_{i-j}\right\}  _{i\geq0}$ is called the \textbf{convolution}
of the sequences $\left\{  a_{i}\right\}  _{i\geq0}$ and $\left\{
b_{i}\right\}  _{i\geq0}$. Hence we see that sequence of arithmetic means of
the \emph{convolution} of two pm sequences is also a pm sequence.

More particular cases and their applications can be found in \cite{Benn11}
(formulae (31)--(34)).
\end{remark}

As a corollary from the interesting results of Layman (see \cite{Layman2001})
we have the following result.

\begin{lemma}
The following two pm sequences $\left\{  a_{n}\right\}  _{n\geq0}$ and
$\left\{  \sum_{j=0}^{n}\binom{n}{j}(-a_{1})^{j}a_{n-j}\right\}  _{n\geq0}$
have the same Hankel transforms provided $a_{0}\allowbreak=\allowbreak1$.
\end{lemma}

\begin{proof}
Let us denote elements of the second sequence by $\hat{a}_{n}.$ First if
$a_{1}\allowbreak=\allowbreak0$ then the assertion is true. So let us assume
that $a_{1}\neq0.$ Secondly notice $\det\left[  a_{i+j}\right]  _{i,j=0,1}%
\allowbreak=\allowbreak a_{0}a_{2}-a_{1}^{2},$ while $\det\left[  \hat
{a}_{i+j}\right]  _{i,j=0,1}\allowbreak=\allowbreak a_{0}(a_{2}-a_{1}^{2})$.
Hence, to have equality of the two determinants we have to assume
$a_{0}\allowbreak=\allowbreak1$. Let us notice that we have
\begin{equation}
\sum_{j=0}^{n}\binom{n}{j}(-a_{1})^{j}a_{n-j}\allowbreak=\allowbreak
(-a_{1})^{n}\sum_{j=0}^{n}\binom{n}{j}a_{n-j}/(-a_{1})^{n-j}. \label{pom}%
\end{equation}
Now notice that $\sum_{j=0}^{n}\binom{n}{j}a_{n-j}/(-a_{1})^{n-j}$ is the
binomial transform of the sequence $\left\{  a_{j}/(-a_{1})^{j}\right\}
_{j\geq0}$. By the Layman's Theorem 1, these two sequences have the same
Hankel transforms. Now, the sequence $\left\{  a_{j}/(-a_{1})^{j}\right\}
_{j\geq0}$ has the Hankel transform equal to $\left\{  \det[a_{i+j}]_{0\leq
i,j\leq n}/(-a_{1})^{2n}\right\}  .$ Hence the sequence $\left\{  \sum
_{j=0}^{n}\binom{n}{j}a_{n-j}/(-a_{1})^{n-j}\right\}  $ has the same Hankel
transform. Now by (\ref{pom}) we deduce that the sequence\newline$\left\{
\sum_{j=0}^{n}\binom{n}{j}(-a_{1})^{j}a_{n-j}\right\}  _{n\geq0}$ has the
Hankel transform equal to \newline$\left\{  (-a_{1})^{2n}\det[a_{i+j}]_{0\leq
i,j\leq n}/(-a_{1})^{2n}\right\}  \allowbreak=\allowbreak\left\{  \det
[a_{i+j}]_{0\leq i,j\leq n}\right\}  $.
\end{proof}

\begin{remark}
From the above-mentioned lemma it follows that the random variables $X$ and
$X-EX$ have the same Hankel transforms. In other words, the sequence of
(ordinary) moments of a random variable and the sequence of its central
moments have the same Hankel transforms.
\end{remark}

As before majority of the assertions of the Proposition below are known. We
provide their simple, probabilistic proofs.

\begin{proposition}
\label{second}The following sequences are pm sequences:

1) $\forall a\in\mathbb{R}:$ $\left\{  a^{n}\right\}  $,

2) $\left\{  n!\right\}  _{n\geq0},$

3) $\left\{
\begin{array}
[c]{ccc}%
1 & if & n=0\\
0 & if & n\text{ is odd}\\
(2k-1)!! & if & n=2k
\end{array}
\right.  ,\allowbreak k=1,2,\ldots\ \text{,}\ $

4) Catalan numbers i.e.\newline$\left\{  \binom{2n}{n}/(n+1)\right\}
_{n\geq0}\ \text{,}\ $ $\ $

5) $\forall k>-1:\{1/(n+1)^{k+1}\}_{n\geq0},$~

6) $\forall\alpha\geq0:\left\{  \left(  \alpha\right)  ^{(n)}\right\}
_{n\geq0}$, where $\left(  \alpha\right)  ^{(n)}\allowbreak=\allowbreak
\alpha(\alpha+1)\ldots(\alpha+n-1)$ is the so called raising factorial of
$\alpha$,

7) $\forall\alpha,\beta>0:\left\{  \frac{\left(  \alpha\right)  ^{(n)}%
}{\left(  \alpha+\beta\right)  ^{(n)}}\right\}  _{n\geq0}$, consequently
$\forall\alpha>0:\left\{  \left(  \alpha\right)  ^{(n)}/n!\right\}  _{n\geq0}$,

8) $\left\{  F_{n+1}\right\}  _{n\geq0}\ $,$\ $ $\left\{  F_{n+3}\right\}
_{n\geq0},$ $\left\{  F_{n+5}\right\}  _{n\geq0},\ldots$, $\left\{
F_{2n+2}\right\}  _{n\geq0}$, $\left\{  F_{n+1}/(n+1)\right\}  _{n\geq
0}\ \text{,}\ $ $\left\{  F_{2n+2}/(n+1)\right\}  _{n\geq0}\ \text{,}\ $
$\left\{  (F_{n+2}-1)/(n+1)\right\}  _{n\geq0}$ , $\left\{  (F_{2n+1}%
-1)/(n+1)\right\}  _{n\geq1},$ for any natural $k$ $\left\{  1/F_{2k+n}%
\right\}  _{n\geq0}$, where $F_{n}$ denotes $n-$th Fibonacci number,.

9) $\forall\lambda\geq0:\left\{  \sum_{j=0}^{n}\lambda^{j}%
\genfrac{\{}{\}}{0pt}{}{n}{j}%
\right\}  _{n\geq0}$ where $%
\genfrac{\{}{\}}{0pt}{}{n}{j}%
$ denotes Stirling number of the second kind.

10) $\left\{  B_{n}\right\}  _{n\geq0},$ $\left\{  B_{n+1}\right\}  _{n\geq0}$
where $B_{n}$ is Bell number.
\end{proposition}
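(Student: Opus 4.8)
The plan is to prove every item by identifying the listed sequence as the moment sequence $\left\{ \int x^{n}\,\mu (dx)\right\} $ of an explicitly named positive measure $\mu $, falling back on the closure properties of Proposition \ref{first} and, where convenient, on the Hankel--determinant characterization (the Theorem above) for the derived cases. Most items are moment sequences of classical distributions, so the heart of the argument is simply to recall the right measure and compute a single moment integral.

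Concretely, I would realize the base cases as follows. For (1) take the point mass $\delta _{a}$, so that $\int x^{n}\,\delta _{a}(dx)=a^{n}$. For (2) take the exponential measure $e^{-x}\,dx$ on $[0,\infty )$, giving $\int_{0}^{\infty }x^{n}e^{-x}\,dx=\Gamma (n+1)=n!$. For (3) take the standard Gaussian measure, whose odd moments vanish and whose $(2k)$-th moment equals $(2k-1)!!$. For (5), the substitution $x=e^{-t}$ turns $\int_{0}^{1}x^{n}(-\ln x)^{k}\,dx$ into the Gamma integral $\Gamma (k+1)/(n+1)^{k+1}$, so the positive measure $\tfrac{1}{\Gamma (k+1)}(-\ln x)^{k}\,dx$ on $(0,1]$ (finite precisely because $k>-1$) has moments $1/(n+1)^{k+1}$. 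For (6) take the Gamma measure $\tfrac{1}{\Gamma (\alpha )}x^{\alpha -1}e^{-x}\,dx$ on $[0,\infty )$, whose $n$-th moment is $\Gamma (\alpha +n)/\Gamma (\alpha )=(\alpha )^{(n)}$ (and $\delta _{0}$ when $\alpha =0$). For (7) take the Beta measure proportional to $x^{\alpha -1}(1-x)^{\beta -1}$ on $[0,1]$, whose $n$-th moment is $(\alpha )^{(n)}/(\alpha +\beta )^{(n)}$; the stated consequence $\left\{ (\alpha )^{(n)}/n!\right\} $ is the special case $\beta =1-\alpha $ (using $n!=(1)^{(n)}$), with the endpoint $\alpha =1$ giving the constant sequence $\left\{ 1^{n}\right\} $ of (1). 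For (4) I would either invoke the classical fact that the Hankel determinants of the Catalan numbers are all equal to $1$ and apply the Theorem, or exhibit the Marchenko--Pastur (free Poisson) measure $\tfrac{1}{2\pi }\sqrt{(4-x)/x}\,dx$ on $[0,4]$, whose $n$-th moment is the Catalan number. For (9) I would use that the sum in item (9) is exactly the $n$-th moment of the Poisson measure with mean $\lambda \geq 0$ (its raw moments being the Touchard polynomials); then (10) is the case $\lambda =1$, giving $B_{n}$, and $\left\{ B_{n+1}\right\} $ is obtained by size-biasing: since the Poisson measure is carried by $[0,\infty )$, $x\,\mu (dx)$ is again a positive measure (of total mass $EN=1$) whose $n$-th moment is $B_{n+1}$.

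For the Fibonacci item (8) the engine is Binet's formula $F_{m}=(\phi ^{m}-\psi ^{m})/\sqrt{5}$ with $\phi =(1+\sqrt{5})/2$, $\psi =(1-\sqrt{5})/2$. Writing $F_{n+1}=\tfrac{\phi }{\sqrt{5}}\phi ^{n}+\tfrac{-\psi }{\sqrt{5}}\psi ^{n}$ exhibits $\left\{ F_{n+1}\right\} $ as a nonnegative combination of the pm sequences $\left\{ \phi ^{n}\right\} $ and $\left\{ \psi ^{n}\right\} $ (assertion 1), i.e. as the two-point measure $\tfrac{\phi }{\sqrt{5}}\delta _{\phi }+\tfrac{-\psi }{\sqrt{5}}\delta _{\psi }$; the odd shifts $\left\{ F_{n+3}\right\} ,\left\{ F_{n+5}\right\} ,\dots $ then follow from the even-shift closure (assertion 5), and the doubled-index sequence is handled the same way after noting $\phi ^{2},\psi ^{2}>0$. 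The sequences divided by $(n+1)$ are treated by multiplying by the uniform-$[0,1]$ moment sequence $\left\{ 1/(n+1)\right\} $ and invoking the product closure (assertion 3); equivalently they are the averaging (Hausdorff) means of (\ref{av_con}).

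The part I expect to be the main obstacle is precisely item (8): the doubled-index and the ``$-1$''-corrected sequences are delicate because subtracting a point mass (the $-1$) or pairing $\phi ^{2}$ with $\psi ^{2}$ need not leave a positive measure, so positivity must genuinely be checked, and the outcome is sensitive to the chosen Fibonacci indexing convention; for these I would fall back on a direct verification that the Hankel determinants are nonnegative (the Theorem) rather than trust a purely formal Binet decomposition. A secondary point needing care is the range in the consequence of (7), where $\beta =1-\alpha >0$ forces $0<\alpha \leq 1$, so that claim should be read with the endpoint $\alpha =1$ treated separately.
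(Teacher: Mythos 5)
Your treatment of items (1)--(7), (9), (10) is essentially identical to the paper's proof: the same classical measures (point mass, exponential, Gaussian, the density $\sqrt{(4-x)/x}$ on $[0,4]$ for Catalan numbers, $(-\log x)^{k}/\Gamma (k+1)$ on $(0,1]$, Gamma, Beta, Poisson). Your size-biasing argument for $\left\{ B_{n+1}\right\} $ is a cosmetic variant of the paper's route via $B_{n+1}=\sum_{j}\binom{n}{j}B_{j}$ and assertion 2 of Proposition \ref{first}: the size-biased Poisson$(1)$ law coincides with the law of $1+X$, $X\sim $Poisson$(1)$, so the two arguments name the same measure. Your Binet decomposition of $\left\{ F_{n+1}\right\} $ and its odd shifts, with the observation that the weights $\phi /\sqrt{5}$ and $-\psi /\sqrt{5}$ are positive, is also exactly the paper's argument.

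The two caveats you raise are not mere prudence; they locate real defects, and this is also where your own proposal stops short of a proof. First, your restriction $0<\alpha \leq 1$ in the consequence of (7) is genuinely necessary: for $\alpha =2$ the sequence $(2)^{(n)}/n!=n+1$ has second Hankel determinant $1\cdot 3-2^{2}=-1<0$, so the proposition's ``$\forall \alpha >0$'' is false as printed, and the paper's own substitution $\beta =k-\alpha $ likewise only produces $\left\{ (\alpha )^{(n)}/(k)^{(n)}\right\} $ for integer $k>\alpha $, which is the claimed sequence only when $k=1$. Second, your suspicion about the doubled index is confirmed: $\left\{ F_{2n+2}\right\} $ is \emph{not} a pm sequence, since $F_{2}F_{6}-F_{4}^{2}=8-9=-1<0$; what assertion 3 of Proposition \ref{first} applied to $\left\{ F_{n+1}\right\} $ actually yields is $\left\{ F_{2n+1}\right\} $, whose Binet weights $\phi /\sqrt{5}$ and $-\psi /\sqrt{5}$ attached to $(\phi ^{2})^{n}$ and $(\psi ^{2})^{n}$ are nonnegative. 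This breaks your route to $\left\{ F_{2n+2}/(n+1)\right\} $ (you cannot multiply the non-pm sequence $\left\{ F_{2n+2}\right\} $ by $\left\{ 1/(n+1)\right\} $ via assertion 3); the repair is the identity $\sum_{i=0}^{n}F_{2i+1}=F_{2n+2}$, which exhibits $F_{2n+2}/(n+1)$ as the Ces\`{a}ro mean (\ref{av_con}) of the pm sequence $\left\{ F_{2n+1}\right\} $.

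Finally, your declared fallback for the ``$-1$''-corrected sequences --- direct verification of Hankel determinants --- is not a proof method: checking finitely many determinants can refute pm-ness but never establish it, since all determinants must be controlled. Here refutation is in fact what happens under the printed indexing: $\left\{ (F_{n+2}-1)/(n+1)\right\} _{n\geq 0}$ begins $0,\tfrac{1}{2},\tfrac{2}{3},\ldots $ with $H_{1}=0\cdot \tfrac{2}{3}-\tfrac{1}{4}=-\tfrac{1}{4}<0$, so it fails, while the convolution average (\ref{av_con}) with $a_{n}=F_{n+1}$, $b_{n}=1$, which the paper invokes, really proves the shifted sequence $\left\{ (F_{n+3}-1)/(n+1)\right\} _{n\geq 0}$ via $\sum_{i=0}^{n}F_{i+1}=F_{n+3}-1$ (this has $H_{1}=\tfrac{4}{3}-1>0$, consistently). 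The paper's own proof of these last items is garbled (its closing sentence for item 8 trails off), so you should not expect to reconstruct it; a correct write-up must repair the indices ($F_{2n+1}$ in place of $F_{2n+2}$, $F_{n+3}-1$ in place of $F_{n+2}-1$) rather than verify the statement as printed.
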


\begin{proof}
1) $\left\{  a^{n}\right\}  _{n\geq0}$ is the moment sequence of a constant
random variable. 2) $n!\allowbreak=\allowbreak\int_{0}^{\infty}x^{n}%
\exp(-x)dx,$ 3) $\left\{
\begin{array}
[c]{ccc}%
1 & if & n=0\\
0 & if & n\text{ is odd}\\
(2k-1)!! & if & n=2k
\end{array}
\right.  \allowbreak=\allowbreak\int_{-\infty}^{\infty}x^{n}\frac{\exp
(-x^{2}/2)}{\sqrt{2\pi}}dx,$ 4) $\binom{2n}{n}/(n+1)\allowbreak=\allowbreak
\frac{1}{2\pi}\int_{0}^{4}x^{n}\sqrt{(4-x)/x}dx,$ 5) $1/(n+1)^{k+1}%
\allowbreak=\allowbreak\int_{0}^{1}x^{n}\frac{(-\log(x))^{k}}{\Gamma(k+1)}dx$,
6) $\left(  \alpha\right)  ^{(n)}\allowbreak=\allowbreak\frac{1}{\Gamma
(\alpha)}\int_{0}^{\infty}x^{n}x^{\alpha-1}\exp(-x)dx$, 7) $\frac{\left(
\alpha\right)  ^{(n)}}{\left(  \alpha+\beta\right)  ^{(n)}}\allowbreak
=\allowbreak\frac{1}{B(\alpha,\beta)}\int_{0}^{1}x^{n}x^{\alpha-1}%
(1-x)^{\beta-1}dx$, where $B(\alpha,\beta)$ is the value of beta function at
$\alpha$ and $\beta,$ Now taking $\beta\allowbreak=\allowbreak k\allowbreak
-\allowbreak\alpha$ we deduce that $\left\{  \frac{(\alpha)^{(n)}}%
{(k+n-1)!}\right\}  $. 8) Fibonacci numbers are defined as the solution of the
following difference equation $F_{n+2}\allowbreak=\allowbreak F_{n+1}+F_{n}.$
with $F_{0}\allowbreak=\allowbreak0,$ $F_{1}\allowbreak=\allowbreak1.$ Hence
\[
F_{n}\allowbreak=\allowbreak\frac{1}{\sqrt{5}}\left(  \frac{1+\sqrt{5}}%
{2}\right)  ^{n}-\frac{1}{\sqrt{5}}\left(  \frac{1-\sqrt{5}}{2}\right)  ^{n}.
\]
Now
\[
F_{n+2k+1}\allowbreak=\allowbreak\frac{1}{\sqrt{5}}\left(  \frac{1+\sqrt{5}%
}{2}\right)  ^{2k+1}\left(  \frac{1+\sqrt{5}}{2}\right)  ^{n}-\frac{1}%
{\sqrt{5}}\left(  \frac{1-\sqrt{5}}{2}\right)  ^{2k+1}\left(  \frac{1-\sqrt
{5}}{2}\right)  ^{n}.
\]
Now notice that $\forall k\geq0$ both $\frac{1}{\sqrt{5}}\left(  \frac
{1+\sqrt{5}}{2}\right)  ^{2k+1}$ and $-\frac{1}{\sqrt{5}}\left(  \frac
{1-\sqrt{5}}{2}\right)  ^{2k+1}$ are positive. Now we apply assertion 1 of
Proposition \ref{first} We have also
\[
F_{n+3}=\frac{1}{\sqrt{5}}\left(  \frac{1+\sqrt{5}}{2}\right)  ^{n}-\frac
{1}{\sqrt{5}}\left(  \frac{1-\sqrt{5}}{2}\right)  ^{3}\left(  \frac{1-\sqrt
{5}}{2}\right)  ^{n}.
\]
But $-\frac{1}{\sqrt{5}}\left(  \frac{1-\sqrt{5}}{2}\right)  ^{3}%
\allowbreak=\allowbreak1\allowbreak-\allowbreak\frac{2}{\sqrt{5}}>0.$
Similarly we can consider $F_{n+5}$ and generally $F_{n+2k+1}.$ They
constitute pm sequence (no probabilistic in general) since $-\frac{1}{\sqrt
{5}}\left(  \frac{1-\sqrt{5}}{2}\right)  ^{2k+1}>0$ for $k\geq0.$ Applying
third assertion of the above-mentioned Proposition \ref{first}, we deduce that
$\left\{  F_{2n+2}\right\}  _{n\geq0}.$ The fact that $\left\{  F_{n+1}%
/(n+1)\right\}  _{n\geq0}$ and $\left\{  F_{2n+2}/(n+1)\right\}  _{n\geq0}$
are pm sequences follows third assertion of the above-mentioned Proposition
\ref{first}, where $a_{n}\allowbreak=\allowbreak F_{n+1}$ or $a_{n}%
\allowbreak=\allowbreak F_{2n+2}$ and $b_{n}\allowbreak=\allowbreak1/(n+1).$
Similarly, using the well known (see e.g. \cite{Grimaldi12}) property of
Fibonacci numbers $F_{1}\allowbreak+\allowbreak\ldots\allowbreak+\allowbreak
F_{n}\allowbreak=\allowbreak F_{n+2}\allowbreak-\allowbreak1$ and then
(\ref{av_con}) with $a_{n}\allowbreak=\allowbreak F_{n+1}$ and $b_{n}%
\allowbreak=\allowbreak1$ we see that $\left\{  (F_{n+2}-1)/(n+1)\right\}
_{n\geq0}$ is a pm sequence. Now we apply assertion 3 of Proposition
\ref{first} and use the fact that $2n+1.$ The last case i.e. proof that the
sequence $\left\{  1/F_{2k+n}\right\}  _{n\geq0} $ is a pm sequence for every
natural $k$ is treated and proved in the excellent paper by Berg
\cite{Berg11}. 9) We start with the observation that if $X$ has the so-called
Poisson distribution with parameter $\lambda\geq0$ , i.e.,%
\[
P(X=k)=\lambda^{k}\exp(-\lambda)/k!,
\]
then
\[
EX^{n}\allowbreak=\allowbreak\sum_{j=0}^{n}%
\genfrac{\{}{\}}{0pt}{}{n}{j}%
\lambda^{j}.
\]
10) Now, we use the fact that the so-called Bell numbers $B_{n}$ are defined
as
\[
B_{n}\allowbreak=\allowbreak\sum_{j=0}^{n}%
\genfrac{\{}{\}}{0pt}{}{n}{j}%
.
\]
We also use the well-known fact that
\[
B_{n+1}\allowbreak=\allowbreak\sum_{j=0}^{n}\binom{n}{j}B_{j}%
\]
and apply assertion 2) of the Proposition \ref{first} with $h_{i,n}%
(\chi)\allowbreak=\allowbreak\binom{n}{i}.$
\end{proof}

\begin{lemma}
If $\left\{  a_{n}\right\}  _{n\geq0}$ is a pm sequence, then the following
sequence of polynomials indexed by $n\geq0$
\[
\sum_{j=0}^{2n}a_{j}x^{j}/j!
\]
assumes only nonnegative values for $x\in\mathbb{R}$.
\end{lemma}

\begin{proof}
If $x\allowbreak=\allowbreak0,$ then the assertion is true. Hence assume that
$x\neq0$. We consider sequence of Haussdorf means with $\left\{
a_{n}\right\}  ,$ $\left\{  b_{n}\allowbreak=\allowbreak x^{-n}n!\right\}  $,
$\alpha\allowbreak=\allowbreak1/d,$ $\beta\allowbreak=\allowbreak1/(1-d),$
$\chi$ being the one point measure concentrated at point $d\in(0,1),$
Consequently we have
\[
h_{i,n}(\chi)=\binom{n}{i}d^{i}\left(  1-d\right)  ^{n-i}.
\]
Now following Proposition \ref{first}(2) we deduce that the following sequence
is a pm sequence:%
\[
\sum_{j=0}^{n}\binom{n}{i}a_{j}x^{-n+j}(n-j)!\allowbreak=\allowbreak\frac
{n!}{x^{n}}\sum_{j=0}^{n}a_{j}x^{j}/j!.
\]
Now since all elements of a pm sequence with even indexes are positive and
since $1/x^{2n}>0$ for all $x\neq0$ and $\left(  2n\right)  !$ is positive we
get our assertion.
\end{proof}

\section{Moment sequences and linear difference equations\label{MD}}

First, let us fix the terminology. The best one seems to be taken from the
systems theory. So let us consider the following difference equation with
constant parameters:%
\begin{equation}
\sum_{j=0}^{m}d_{j}r_{n+j}=c_{n}, \label{diff}%
\end{equation}
with complex, in general parameters$\left\{  c_{n}\right\}  $, $d_{j},$
$j\allowbreak=\allowbreak0,\ldots,m,$ and $d_{m}\allowbreak\neq\allowbreak0$
and with also complex in general so-called initial conditions: $r_{0}%
=p_{0},\ldots,r_{m-1}=p_{m-1}.$

If $\forall n\geq0:c_{n}=0,$ then the equation is called homogeneous otherwise
it is called non-homogeneous.

Recall that we dealt with this type of difference equations, above, when
discussing fR-sequences.

For the sake of the completeness of the paper, let us recall the basic
properties of the linear difference equations with constant parameters.

Given the so-called system parameters: $\left\{  d_{j}\right\}  _{j=0}^{m}$,
(sometimes one says about input-output system parameters) the input sequence
$\left\{  c_{n}\right\}  _{n\geq0}$ and initial conditions, the task is to
find the so-called output sequence $\left\{  r_{n}\right\}  _{n\geq0}$. The
equation (\ref{diff}) with the set of initial values is called initial value
problem. It is known that every the initial value problem has always a
solution, i.e. the sequence $\left\{  r_{n}\right\}  $ is determined uniquely.
In other words, the set of parameters, the input sequence and the set of
initial conditions determine the output sequence uniquely.

Moreover, the parameter $m$ is called the order of the difference equation and
the following algebraic equation:%
\begin{equation}
\sum_{j=0}^{m}d_{j}x^{j}=0, \label{char}%
\end{equation}
is called a characteristic equation of the difference equation (\ref{diff}).
The roots of the characteristic equation are very important since they enable
the construction of the solution of the homogeneous equation. The formula for
this solution is very simple, if the roots are all different. The case of the
different roots of the characteristic equation is also important for the
problem of checking if the solution is a pm sequence. Hence we will present
this solution for the case of different roots only. For the case of multiple
roots we direct the reader to the monograph \cite{Cull05}.

Notice that when roots of the characteristic equation (\ref{char}) are all
different and equal to $b_{j},j\allowbreak=\allowbreak1,\ldots,m$ we have the
alternative form of the characteristic equation namely
\[
\prod_{j=1}^{m}(x-b_{i})=0.
\]
On the other hand we have expansion
\[
\prod_{j=1}^{m}(x-b_{i})=\sum_{j=0}^{m}x^{m-j}(-1)^{j}S_{j}(\mathbf{b),}%
\]
where $S_{j}(\mathbf{b)}$ denotes a $j$-th simple symmetric function of the
numbers $b_{1},\ldots,b_{m}$ with $S_{0}(\mathbf{b)\allowbreak=\allowbreak1.}$
Hence, we have relationship between parameters $\left\{  d_{j}\right\}
_{j=0}^{m}$ and numbers $\left\{  b_{j}\right\}  _{j=1}^{m}.$

We start with the homogeneous linear difference equations with constant
coefficients. Obviously the following sequence of numbers:
\[
r_{n}=\sum_{k=1}^{m}\alpha_{k}\left(  b_{k}\right)  ^{n},
\]
for different real $b_{k},$ and non-negative $\alpha_{k},$ $k\allowbreak
=\allowbreak1,\ldots,m$ is a pm sequence. By the way, the support of the
measure that generates this sequence has finite cardinality, equal to $m$ and
consists of points of the set $\left\{  b_{1},\ldots,b_{m}\right\}  $. On the
other hand, it is known that this sequence, i.e., $\left\{  r_{n}\right\}
_{n\leq0}$ is a solution of the following difference equation%
\begin{equation}
\sum_{j=0}^{m}(-1)^{j}S_{j}(\mathbf{b})r_{n+m-j}=0 \label{hom}%
\end{equation}
with initial conditions $r_{0}\allowbreak=\allowbreak p_{0},\ldots
,r_{m-1}\allowbreak=\allowbreak p_{m-1}$ selected is such a way that
coefficients $\left\{  \alpha_{k}\right\}  _{k=1,\ldots,m}$ are non-negative.

It is known that there exists a direct one-to-one relationship between numbers
$\left\{  p_{0},\ldots,p_{m-1}\right\}  $ and the numbers $\left\{
a_{k}\right\}  _{k=1,\ldots,m}$ with an obvious relationship:
\[
\sum_{j=1}^{m}a_{j}b_{j}^{k}=p_{k},
\]
for $k\allowbreak=\allowbreak0,\ldots m-1$. $.$

The fact that in the case of pm sequences only different roots are concerned
can be seen, when studying the following example. Let's take $m\allowbreak
=\allowbreak2$ and $b_{1}\allowbreak=\allowbreak b_{2}\allowbreak=\allowbreak
a$ i.e. we consider the following equation
\[
r_{n+2}-2ar_{n+1}+a^{2}r_{n}=0,
\]
with $r_{0}\allowbreak=\allowbreak1$ and $r_{1}\allowbreak=\allowbreak r1$. It
is not difficult to check that the following sequence
\[
r_{n}=a^{n-1}(nr_{1}-a(n-1)),
\]
for $n\geq0$ satisfies the equation. Now, as it is also easy to check, the
Hankel transform of this sequence is $1,-(r1-a)^{2},0,\ldots$ . Hence, unless
$a\allowbreak=\allowbreak r1,$ for no real $a$ and $r1$, $\left\{
r_{n}\right\}  $ is a pm sequence. If $r1\allowbreak=\allowbreak a$ we have
the obviously trivial sequence $r_{n}\allowbreak=\allowbreak a^{n}.$

One has to underline that the case of two complex but conjugate roots of the
characteristic equation also leads to the non-pm case. More precisely, if one
considers, for example, the difference equation:
\[
r_{n+2}+a^{2}r_{n}=0,
\]
with $r_{0}\allowbreak=\allowbreak1,$ $r_{1}\allowbreak=\allowbreak r1.$ Then
$\left\{  r_{n}\right\}  $ is not a pm sequence since we have $r_{2}%
\allowbreak=\allowbreak-a^{2}<0.$

The situation becomes more complicated and generally different when one
considers nonhomogeneous linear equations with constant coefficients. That is
when one considers the following difference equation:%
\begin{equation}
\sum_{j=0}^{m}(-1)^{j}S_{j}(\mathbf{b})r_{n+m-j}=c_{n}, \label{nhom}%
\end{equation}

with $\left\{  c_{n}\right\}  $ being a pm sequence. First, let us consider
the case when $\left\{  c_{n}\right\}  $ is a pm sequence generated by a
discrete distribution with finite support.

As an example, let us take $m\allowbreak=\allowbreak1$ and $c_{n}%
\allowbreak=\allowbreak d^{n}$.

That is, let us consider such an equation%
\[
r_{n+1}-ar_{n}=d^{n},
\]
with an initial condition $r_{0}=p_{0}.$ As it is commonly known the solution
of such an equation is given by the formula:
\[
r_{n}=\frac{a^{n}-d^{n}}{a-d}+p_{0}a^{n},
\]
for $n\geq0.$ Finding Hankel transform of this sequence we get: $p_{0}%
,p_{0}(d-a)-1,0,\ldots$ . Hence $\left\{  r_{n}\right\}  $ is a pm sequence
iff $p_{0}\geq0$ and $p_{0}(d-a)-1\geq0$. In particular, if $d>a$ then
$p_{0}\geq\frac{1}{d-a}.$

Let us note, that if we consider a second-order nonhomogeneous equation of the
following form:%
\[
r_{n+2}+a^{2}r_{n}=b^{n},
\]
with $r_{0}\allowbreak=\allowbreak r0$ and $r_{1}\allowbreak=\allowbreak r1.$
One can easily split solving this equation into two parts. First to consider
the case of even $n$ and then the odd $n$ case. Anyway, solving this equation
doesn't cause any difficulty. With the help of Mathematica, one obtains the
following sequence of Hankel transform of the sequence $\left\{
r_{n}\right\}  .$ Namely, we get:%
\[
r0,r0-a^{2}r0-r1^{2},-b^{2}\left(  r1^{2}+a^{2}r0^{2}\right)  +2r1b-1+2a^{2}%
r0-a^{2}(a^{2}r0^{2}+r1^{2}),0,\ldots.
\]
Now, it is enough to notice that the polynomial is $-b^{2}\left(  r1^{2}%
+a^{2}r0^{2}\right)  +2r1b-1+2a^{2}r0-a^{2}(a^{2}r0^{2}+r1^{2})$ negative for
all $b.$ Consequently sequence $\left\{  r_{n}\right\}  $ cannot be a pm
sequence for any $b$ different from zero. We will see in a moment that this is
not the case when one considers sequence $\left\{  c_{n}\right\}  $ being a
moment sequence of absolutely continuous measure.

To avoid unnecessary complications we will consider signed measures $d\mu$
defined on the real line that satisfy the so-called Cramer's condition, that
is that there exists $\delta>0$ such that
\begin{equation}
\int\exp(\delta\left\vert x\right\vert )d\left\vert \mu\right\vert (x)<\infty.
\label{IM}%
\end{equation}
It is known, that if a measure satisfies this condition, then it can be
identified by its moments. Let us call the set of such measures $Cra.$

Now let us consider a positive measure $dA\in Cra$ and a polynomial $Q(x)$
both such that
\begin{equation}
dB(x)\allowbreak=\allowbreak\frac{1}{Q(x)}dA(x) \label{BAP}%
\end{equation}
Note that following Proposition 1 of \cite{Szabl21} if only $dA\in Cra$ and
\begin{equation}
\int\frac{1}{Q(x)^{2}}dA(x)<\infty\text{ and }1/Q(x)\geq0 \label{condP}%
\end{equation}
on the $\operatorname*{supp}A,$ then $dB\in Cra$ and it is a positive measure.

\begin{theorem}
\label{main}Let sequences, respectively $\left\{  a_{n}\right\}  _{n\geq0}$
and $\left\{  b_{n}\right\}  _{n\geq0}$ be pm sequences generated by the
measures $dA$ and $dB\in Cra,$ related to one another by (\ref{BAP}) with
polynomial $Q$ defined by%
\[
Q(x)=\sum_{j=0}^{m}c_{j}x^{j},
\]
with $c_{m}\allowbreak\neq\allowbreak0$. Then, the sequences are related to
one another by the following difference equation:%
\begin{equation}
\sum_{j=0}^{m}c_{j}b_{n+j}=a_{n}, \label{DE}%
\end{equation}
with initial conditions
\begin{equation}
b_{k}\allowbreak=\allowbreak\int\frac{x^{k}}{Q(x)}dA(x), \label{init}%
\end{equation}
$k\allowbreak=\allowbreak0,\ldots,m-1.$ iff $Q\left(  x\right)  \geq0$ on
$\operatorname*{supp}A.$
\end{theorem}

\begin{proof}
We start with an obvious observation that for $dA$ and $dB$ to positive
measures, we have to have $Q\left(  x\right)  \geq0$ on the
$\operatorname*{supp}A.$ Further we have equality: $\forall n\geq0$:
$a_{n}\allowbreak=\allowbreak\int x^{n}dA(x).$ Now, we have $a_{n}%
\allowbreak=\allowbreak\int Q(x)x^{n}dB(x)\allowbreak=\allowbreak\sum
_{j=0}^{m}c_{j}b_{j+n}$. To get unique solution of this equation, we need to
set $m$ initial conditions that can be found by calculating $m$ numbers
defined by (\ref{init}). Conversely, assuming that both sequences $\left\{
a_{n}\right\}  _{n\geq0}$ and $\left\{  b_{n}\right\}  _{n\geq0}$ are pm
sequences of measures respectively $dA,$ $dB\in Cra$ we deduce that for
$n\geq0$
\[
\int x^{n}dA\left(  x\right)  =\int\left(  \sum_{j=0}^{m}c_{j}x^{j}\right)
x_{n}dB(x),
\]
Now, since $dA,$ $dB\in Cra$ we have uniqueness of the measure that generates
given sequence of moments. Hence, we deduce that we have%
\[
dA(x)=Q(x)dB(x).
\]
In particular that $Q\left(  x\right)  \geq0$ on the $\operatorname*{supp}A,$
since supports of the two measures are the same.
\end{proof}

\begin{remark}
Notice that knowing, say the sequence $\left\{  a_{n}\right\}  _{n\geq0}$ and
the polynomial $Q(x)$ satisfying (\ref{condP}), we can expand $1/Q(x)$ in an
infinite series%
\[
1/Q(x)=\sum_{j\geq0}d_{j}x^{j},
\]
and then we have
\[
b_{k}=\sum_{j\geq0}d_{j}a_{j+k},
\]
for $k\allowbreak=\allowbreak0,\ldots,m-1.$
\end{remark}

\begin{remark}
In other words the difference equation (\ref{DE}) with an input of the form of
a pm sequence $\left\{  a_{n}\right\}  _{n\geq0}$ generated by a positive
measure $dA\in Cra$ has the solution that is also a pm sequence generated by
the measure $dB$ also belonging to $Cra$ iff its characteristic polynomial
$Q(x)\geq0$ on $\operatorname*{supp}A$. Notice also that from the fact that
$dB\in Cra$ it follows that zeros of $Q\left(  x\right)  $ cannot occur at
points where $dA$ has atoms, similarly zeros of odd multiplicity cannot occur
at points of increase of the measure $dA.$ \footnote{Point of increase of a
positive measure $\mu$ is such a point $x$ for which $\mu\left(  B\right)  >0$
whenever $x\in B$ for an open set $B.$}. In particular, it has roots of odd
multiplicity outside the support of the measure $dA.$
\end{remark}

\section{Remarks and examples}

Let us return to the examples that were analyzed above.

\begin{example}
We start with the example with two complex conjugate roots of the
characteristic equation Now, let us consider the following equation:%
\[
r_{n+2}+a^{2}r_{n}=b^{n},
\]
with $r_{0}\allowbreak=\allowbreak\frac{1}{a^{2}+b^{2}},$ $r_{1}%
\allowbreak=\allowbreak r1.$ It turns out that the Hankel transform of $r_{n}$
is equal to $\frac{1}{a^{2}+b^{2}},$ $\frac{(b-r1(a^{2}+b^{2}))(b+r1(a^{2}%
+b^{2}))}{(a^{2}+b^{2})^{2}},$ $\frac{-(b-r1(a^{2}+b^{2}))^{2}}{a^{2}+b^{2}%
},0,\ldots,$ . Hence, unless $r1\allowbreak=\allowbreak\frac{b}{a^{2}+b^{2}}$
the solution of the above-mentioned equation cannot be a pm sequence. Thus,
the fact that the sequence $\left\{  r_{n}\right\}  $ is a pm sequence heavily
depends on the initial conditions.
\end{example}

Let us consider one more example.

\begin{example}
Namely, let us consider similar equation excited, this time by a sequence
$\left\{  \frac{1}{n+1}\right\}  _{n\geq0}.$ That is, consider the following
difference equation:%
\[
r_{n+2}+r_{n}=\frac{1}{n+1}.
\]
Now let us recall, that we have:
\[
\int_{0}^{1}x^{n}dx=\frac{1}{n+1}.
\]
So the measure $dA$ has the density equal to $1$ for $x\in\lbrack0,1]$ and
zero otherwise. According to the theorem above, the initial conditions should
be:%
\begin{align*}
r_{0}\allowbreak &  =\allowbreak\int_{0}^{1}\frac{1}{x^{2}+1}dx=\frac{\pi}%
{4},\\
r_{1}  &  =\int_{0}^{1}\frac{x}{x^{2}+1}dx=\frac{1}{2}\log2.
\end{align*}
We see that given these initial conditions we see that
\[
dB(x)=\frac{1}{1+x^{2}}\mathbf{1}_{[0,1]}(x)dx,
\]
and consequently
\[
r_{n}=\int_{0}^{1}\frac{x^{n}}{1+x^{2}}dx.
\]
Now we can get a few first elements of the Hankel transform of the sequence :
$0.785398$, $0.0484346$, $0.000201726$, $5.41176\times10^{-8}$, $9.22425\times
10^{-13}$, $9.9286\times10^{-19},\ldots$ . Now, if we only slightly change the
value of, say, $r_{1}$ by considering, say $r_{1}\allowbreak=\allowbreak
,01\allowbreak+\allowbreak.5\times\log2,$ then, we get the following sequence
of Hankel transforms: $0.785398$, $0.0414032$, $0.00082643$, $-0.0000171229$,
$-1.10472\times10^{-7}$, $-5.22732\times10^{-11}$, $-1.57765\times10^{-15}$.
\end{example}

\begin{remark}
This example suggests that this sensitivity either for the initial conditions
or for the fact if $\frac{1}{Q(x)}dA(x)$ is a positive measure can be used in
the numerical calculation to test if, for example, the roots of the odd
multiplicity of the polynomial $Q(x)$ lie in the support of $dA$ or finding
the first values of the integral (\ref{init}).
\end{remark}

Let us return now to the question if a convolution of two pm sequences is a pm
sequence. From the formula (\ref{av_con}) it follows that the sequence of
arithmetic averages of a convolution of two pm sequences is a pm sequence.
However, if we consider the simple case of one sequence, say, $\left\{
a_{n}\right\}  _{n\geq0}$ is a moments sequence of the distribution $dA$ and
the sequence say $\left\{  b^{n}\right\}  _{n\geq0}$ then the solution of the
difference equation%
\[
r_{n+1}-br_{n}=a_{n},
\]
with $r_{0}\allowbreak=\allowbreak r0$ is given by the formula%
\[
r_{n}\allowbreak=\allowbreak b^{n}r0+\sum_{j=0}^{n-1}b^{n-1-j}a_{j}.
\]
Hence for $r0\allowbreak=\allowbreak0$ sequence $\left\{  r_{n}\right\}  $ is
a sequence of convolutions of $\left\{  b^{n}\right\}  $ and $\left\{
a_{n}\right\}  $. Moreover, we know from the Theorem \ref{main} that it is a
moments sequence if only point $b$ lies outside the support the measure $dA,$
more precisely if $dA(x)/(x-b)$ is a positive measure as it follows from
Theorem \ref{main}.

\begin{remark}
Now, let us notice that the scheme in that we have two positive measures
absolutely continuous with respect to one another is a general situation. One
considers it in the series of papers \cite{Szablowski2010(1)}, \cite{Szab13},
\cite{Szabl22} and \cite{SzabChol}. These papers provide many consequences of
such assumptions, including infinite expansions of the Radon-Nikodym
derivative:
\begin{equation}
\frac{dB}{dA}(x)=\sum_{i\geq0}c_{i}\alpha_{i}(x), \label{Exp}%
\end{equation}
and $\left\{  \alpha_{i}(x)\right\}  _{i\geq0}$ is the sequence of polynomials
orthogonal with respect to the measure $dA.$ The expansion (\ref{Exp})
converges in mean-square $\operatorname{mod}$ $dA$ provided $\int\left(
\frac{dB}{dA}(x)\right)  ^{2}dA(x)<\infty.$ Knowing sequences $\left\{
\alpha_{i}\right\}  $ and $\left\{  \beta_{i}\right\}  $ one is able to find a
numerical sequence $\left\{  c_{i}\right\}  $ and thus get the expansion
(\ref{Exp}). Moreover, following the above mentioned positions of literature,
there exists a finite linear relationship between two sets of polynomials
$\left\{  \alpha_{i}\right\}  $ and $\left\{  \beta_{i}\right\}  $ orthogonal
respectively to $dA$ and $dB$, provided $\frac{dB}{dA}(x)\allowbreak
=\allowbreak\frac{1}{Q(x)}.$ This observation was first made by Pascal Maroni
in a more general but more confining context, not necessarily concerning
measures. Maroni's approach, followed by his associates in the case of
measures concerns mostly polynomials of order at most $4$. For details see
\cite{M1} and \cite{M2} or other,later papers of Maroni et al.. More
precisely, there exists a table $\left\{  w_{i,n}\right\}  _{N\alpha\geq
n\geq0,0\leq i\leq n}$ of real numbers such that:
\[
\alpha_{n}(x)=\sum_{j=0}^{N}w_{j,n}\beta_{n-j}(x),
\]
where $N$ is the order of the polynomial $Q(x).$ This might lead to a new
difference equation, this time with non-constant coefficients.
\end{remark}

\begin{remark}
Note, also, that we could have calculated the moment sequence of $dB$ by
expanding $\frac{1}{Q(x)}$ in an infinite power series and integrating term by
term getting:%
\begin{equation}
\int x^{n}dB(x)\allowbreak=\allowbreak\sum_{j\geq0}d_{j}\int x^{j+n}dA(x),
\label{xp1}%
\end{equation}
where, of course, $\sum_{j\geq0}d_{j}x^{j}$ is the postulated expansion of
$\frac{1}{Q(x)}.$ If the series on the right-hand side of the above-mentioned
formula is convergent, we have the other relationship combining moments of the
measures $dA$ and $dB$. Such an approach of calculating the sequence of
moments of some measure in two different ways can lead to discovering new,
interesting relationships, not only in the moment sequences in question but
also between sequences describing these moment sequences. This was done for
example in the paper \cite{SzabKes}.

As an example, let us consider two distributions : the semicircle with the
density $\frac{1}{2\pi}\sqrt{4-x^{2}}$ for $\left\vert x\right\vert \leq2$ and
zero otherwise and the so-called arcsine distribution with the density
$\frac{1}{\pi\sqrt{4-x^{2}}}$ for $\left\vert x\right\vert <2$ and $0$
otherwise. It is elementary to notice that since these distributions are
symmetric their odd moments are equal to zero. Further, it is well-known that
even elements of the pm sequence generated by these distributions are
respectively the so-called Catalan numbers $C_{n}\allowbreak=\allowbreak
\frac{1}{n+1}\binom{2n}{n}$ and the so-called central binomial coefficients.
Further, we have
\[
\frac{1}{2\pi}\sqrt{4-x^{2}}\times\frac{2}{(4-x^{2})}=\frac{1}{\pi
\sqrt{4-x^{2}}}.
\]
Hence, taking into account that
\[
\frac{2}{(4-x^{2})}=\frac{1}{2}\sum_{j\geq0}x^{2}/4^{j},
\]
for $\left\vert x\right\vert <2,$ the (\ref{xp1}) takes the following form:%
\[
\binom{2n}{n}=\frac{1}{2}\sum_{j\geq0}C_{n+j}/4^{j}.
\]
Note that the convergence here is very slow since $C_{n}\allowbreak
\cong\allowbreak4^{n}/n^{3/2}.$

On the other hand, since we have:%
\[
\frac{1}{2\pi}\sqrt{4-x^{2}}=(2-\frac{1}{2}x^{2})\frac{1}{\pi\sqrt{4-x^{2}}},
\]
we get an obvious relationship:%
\[
2\binom{2n}{n}-\frac{1}{2}\binom{2n+2}{n+1}=C_{n}.
\]
We also see that the sequence $\left\{  \binom{2n}{n}\right\}  _{n\geq0}$ is
the solution of the following difference equation:%
\[
s_{n+1}-4s_{n}=-2C_{n},
\]
with an initial condition : $s_{0}\allowbreak=\allowbreak1.$ Taking into
account general solution of the above-mentioned equation, we end up with the
following identity:%
\[
\binom{2n}{n}=4^{n}-\frac{1}{2}\sum_{j=0}^{n-1}4^{n-j}C_{j}.
\]

\end{remark}

\end{document}